\DeclareMathOperator{\rank}{\mathrm{rank}}
\DeclareMathOperator{\gr}{\mathrm{grad}}
\DeclareMathOperator{\cu}{\mathrm{curl}}
\newcommand{\abs}[1]{\vert #1\vert}  
\newcommand{\ov}{\overline}
\newcommand{\dif}{\mathrm{d}}
\newcommand{\ii}{\mathrm{i}}
\newcommand{\di}{\mathrm{div}}
\newcommand{\tr}{\mathrm{trace}}
\newcommand{\Vol}{\mathrm{Vol}}
\newcommand{\CC}{\mathbb{C}}
\newcommand{\Cc}{\mathfrak{C}}
\newcommand{\HH}{\mathcal{H}}
\newcommand{\VV}{\mathcal{V}}
\newcommand{\Ee}{\mathcal{E}} 
\newcommand{\RR}{\mathbb{R}}   
\newcommand{\ZZ}{\mathbb{Z}} 
\newcommand{\Ss}{\mathbb{S}}
\newtheorem{pr}{Proposition}
\newtheorem{co}{Corollary}
\newtheorem{lm}{Lemma}
\newtheorem{de}{Definition}    
\newtheorem{re}{Remark}
\newtheorem{ex}{Example}
\begin{document}
\title[Steady flows and Faddeev-Skyrme model]{Steady Euler flows and the Faddeev-Skyrme model with mass term}

\author{Radu Slobodeanu}
\address{Faculty of Physics, University of Bucharest, P.O. Box Mg-11, RO--077125 Bucharest-M\u agurele, Romania}
%\address{Institute of Mathematics, University of Neuch\^atel, 11 rue Emile Argand, 2000 Neuch\^atel, Switzerland.}
\email{radualexandru.slobodeanu@g.unibuc.ro}

\thanks{I thank M. Speight for insightful discussions on the Faddeev-Skyrme model and valuable comments on an earlier version of the manuscript. I also acknowledge highly useful  remarks and suggestions from D. Peralta-Salas.}

\subjclass[2010]{53C43, 58E30, 53B50, 74G05, 76M30.}

\keywords{Calculus of variations, fluid, sigma model.}

\date{\today}

\begin{abstract}
We point out a duality between integrable (in an appropriate sense) steady incompressible Euler flows and the solutions of the strongly coupled Faddeev-Skyrme sigma model with a potential term. We supplement this result with various applications and several explicit classical solutions.
\end{abstract}

\maketitle
\section{Introduction}
Effective field theory approach has allowed a rich infusion of techniques and new insights into relativistic hydrodynamics (see e.g. \cite{nico, slob} and references therein). The field is roughly speaking a submersion whose fibres are spanned by the velocity of the fluid and the action functional is constructed by requiring invariance by volume-preserving diffeomorphisms of the codomain (\cite{cristo}). Surprisingly, this variational approach is less obvious for the non-relativistic case, where however other variational formulations (\cite{arn}) are known, specifically for steady-state problem. In this paper we implement the effective field formalism for steady incompressible Euler fluids which in this way turn out to be dual to field solutions of a sigma model with quartic kinetic term and potential. This model correspond precisely to the strong coupling limit (supplemented with a mass term) of a sigma model proposed by Faddeev \cite{fad0} and renewed starting with the numerical evidence in \cite{fad}. Initially designed as an extrapolation of Skyrme's soliton idea in nuclear physics (\cite{sky}) to knot-like structures, Faddeev model acquires in the present context a remarkable physical interpretation in terms of fluid flows. The correspondence between fluids and sigma model solutions (Proposition \ref{main}) is illustrated by explicit examples and, among other possible applications, we derive the non-existence (Proposition \ref{noexist}) of certain axially symmetric field solutions, by exploiting an analogous result \cite{jiu} for steady flows on $\RR^3$ with asymptotic boundary conditions.

The paper is structured as following. In the next section we collect the relevant facts about Euler equations in fluid mechanics, extended to Riemannian 3-manifolds, and in $\S 3$ we present a variational problem related to the Faddeev-Skyrme model. In $\S 4$ we give the main result (Proposition \ref{main}) relating stationary fluid solutions with Faddeev-Skyrme fields, and we derive some extensions and applications. The last section ($\S 5$) is dedicated to  explicit examples illustrating this correspondence.

\section{Fluids in equilibrium}
A fluid moving in (a domain of) $\RR^3$ is described by the following time dependent quantities: the velocity vector $v(x,t)$, the pressure $p(x,t)$ and the density $\rho(x,t)$ scalars. In the absence of energy dissipation the fundamental equations of fluid dynamics are (\cite{land})
\begin{equation}\label{euler}
\left\{
\begin{array}{ccc}
\displaystyle \partial_t v + (v \cdot \nabla)v &=& - \tfrac{1}{\rho} \gr p \quad (\text{Euler's \ equation})\\[3mm]
\partial_t \rho + \di (\rho v) &=& 0 \qquad (\text{equation \ of \ continuity})
\end{array}
\right. ,
\end{equation}
Fluids for which these equations hold are called \textit{inviscid}, or \textit{ideal}. Here we are concerned with \textit{steady} fluids for which $\partial_t v=0$, so that the Euler equation simplifies to 
$(v \cdot \nabla)v = - \tfrac{1}{\rho} \gr p$, where $\tfrac{1}{\rho} \gr p=\gr w$, and $w$ is the \textit{enthalpy} function. By taking the scalar product with $v$ we obtain that the following quantity
$$P = w+\tfrac{1}{2}\abs{v}^2,$$
called the \textit{Bernoulli function}, is conserved along the flow.

We further assume that the fluid is \textit{incompressible}, i.e. $\rho \equiv 1$. Then an \textit{ideal steady incompressible fluid} flow $v$ is a solution of the equations
\begin{equation}\label{euler0}
\left\{
\begin{array}{ccc}
(v \cdot \nabla)v &=& - \gr p \\[2mm]
\di v &=& 0
\end{array}
\right.
\end{equation}

for some pressure function $p$, or, equivalently,

\begin{equation}\label{mhd}
\left\{
\begin{array}{ccc}
v \times \cu v &=& \gr P \\[2mm]
\di v &=& 0
\end{array}
\right.
\end{equation}
for the corresponding Bernoulli function $P = p+\tfrac{1}{2}\abs{v}^2$. For brevity we shall call such solutions \textit{steady Euler flows}. 

Notice that \eqref{mhd} coincide with the \textit{ideal magnetohydrodynamic equilibrium} equations via the substitutions $v \approx B$ (magnetic field) and $P \approx -p$ (hydrostatic pressure).

Recall that $\cu v$ is called \textit{vorticity field} and that the first equation in \eqref{mhd} implies (or, in a simply connected setting, is equivalent to) $[\cu v, v]=0$, that is the two vector fields commute. If $v$ and $\cu v$ are not everywhere collinear, then they give rise to a 2-dimensional foliation (given by tori or annuli representing the regular level surfaces of the Bernoulli function $P$) of the domain. This statement is further refined by the celebrated Arnold structural theorem \cite[Ch.II, $\S$1]{arn}. 

Finally let us mention the fundamental fact that steady fluids equations admit a divergence formulation:
\begin{equation}\label{fluidstress}
\di (\rho v^\flat \otimes v^\flat + p g)=0
\end{equation}
where $g$ is the (euclidean) metric and the divergence free quantity is the \textit{momentum-flux tensor}, $T$. For further use, notice the following equivalent form (for $\rho \equiv 1$) of this tensor at any point where $v\neq 0$: 
$T= \tfrac{1}{2}\abs{v}^2(g^\VV - g^\HH)+Pg$, where $\VV=\mathrm{Span}\{v\}$, $\HH=\VV^\perp$ and superscripts indicate the restriction to the corresponding subspace (so that $g=g^\VV + g^\HH$).

\subsection{Fluids on a manifold} Let $(M, g)$ be an oriented Riemannian 3-manifold with Levi-Civita connection $\nabla$.  We say that a tangent vector field $V$ defines a steady Euler flow on $M$ if $\nabla_V V=-\gr p$ (for some function $p$) and $\di V=0$, that reduce to Equations \eqref{euler0} in the Euclidean case. Around a point where $V$ is not zero, considering $U=\tfrac{1}{\abs{V}} V$, we notice that we can rewrite these equations as follows:
\begin{equation}\label{euler+}
\left\{
\begin{array}{ccc}
\abs{V}^2\left[\nabla_U U - \gr^{\HH} \left(\ln \abs{V}\right)\right] &=& - \gr P \\[2mm]
\di U + U(\ln \abs{V}) &=& 0
\end{array}
\right. 
\end{equation}
where $\gr^\HH$ denotes the component of the gradient orthogonal to $V$ and the Bernoulli function $P$ has the same definition as before.

Denote by $\nu_g$ the associated volume form of $(M,g)$. The \textit{vorticity field} $\cu V$ is defined by
$$
\imath_{\cu V} \nu_g= \dif V^\flat,
$$
where $\imath$ denotes the interior product, or, equivalently, by $\cu V = (* \dif V^\flat)^\sharp$, where $*$ is the Hodge star operator, $\flat$  is the isomorphism sending vectors to dual 1-forms, induced by $g$, and $\sharp$ the inverse of $\flat$. Thus Equation \eqref{mhd} and the subsequent discussion extend to this general setting.

\subsection{Beltrami fields} Recall that an important class of steady Euler flows is provided by \textit{Beltrami fields}, that are divergence-free vector fields satisfying $\cu V = f V$, for some function $f$ on $M$. If $f \equiv 0$, then they are called \textit{potential fields}, and otherwise \textit{rotational Beltrami fields}. When dealing with rotational Beltrami fields, we further distinguish between \textit{linear} or \textit{strong} ($f \equiv cst.$) and \textit{non-linear} ($f\neq cst.$) fields.

Smooth, non-singular rotational Beltrami fields enjoy the remarkable property (\cite{etn}) of being at the same time Reeb-like vector fields for some contact form on $M$ (see \cite{ble} for the contact geometry background).

\section{Strongly coupled Faddeev-Skyrme sigma model with mass term}
In this section we introduce a variational problem related to the Faddeev-Skyrme model \cite{fad0, fad}. For further use, we give explicit expressions for its stress-energy tensor and for the corresponding Euler-Lagrange equations.

Let $(M^m, g)$ and $(N^n, h)$ be Riemannian manifolds ($m\geq n$) and $\varphi: M \to N$ a differentiable mapping. Let $M_\varphi = \{ x \in M : \rank \, \dif \varphi_x < n \}$ be the \textit{critical set} of $\varphi$ (i.e. the set of \textit{critical points}); on $M \setminus M_\varphi$ (i.e. the set of \textit{regular points}),  $\VV = \ker \dif \varphi$ will denote the \textit{vertical} distribution (which is tangent to the fibres, so integrable) and $\HH = \VV^\perp$, the \textit{horizontal} distribution of $\varphi$. The map $\varphi$ is called {\it almost submersion} if it has maximal rank on a dense subset of $M$. Recall also that $\Cc_\varphi = \dif \varphi^t \circ \dif \varphi \in \mathrm{End}(TM)$ is the \textit{Cauchy-Green} (or \textit{strain}) tensor of $\varphi$.

\begin{de}
Let $\ov{P}: N \to \RR_+$ be a non-negative function on $N$. For every map $\varphi: M \to N$ the $\sigma_2$-\emph{energy with potential} over a compact domain $K$ in $M$ is defined by
\begin{equation}\label{si2pot}
\Ee_{\sigma_2 , P}(\varphi, K)=\frac{1}{2}\int_K \{\abs{\wedge^2 \dif \varphi}^2 + 2 \ov{P} \circ \varphi \}\nu_g ,
\end{equation}
A map $\varphi: M \to N$ is called $\sigma_2$-\emph{critical with potential} $\ov P$ if for every compact domain $K$ in $M$ and for any variation $\{\varphi_s\}_{s \in (-\epsilon, \epsilon)}$ supported in $K$, of $\varphi=\varphi_0$, we have
\begin{equation*}
%\begin{split}
\frac{d}{d s} \Bigl\lvert _{s=0}\Ee_{\sigma_2 , P}(\varphi_s, K)=0.
%\end{split}
\end{equation*} 
\end{de}

In this paper we shall focus on \textit{classical solutions} of this variational problem, so we consider only mappings $\varphi$ (and potential functions $\ov P$) having a sufficient regularity for the associated Euler-Lagrange equations (see below) to be well-defined. Thus $\varphi$ is requested to be of class $C^2$ and $\ov P$ of class $C^1$.

Notice that $\abs{\wedge^2 \dif \varphi}^2=\sigma_2(\varphi)=\sum_{i<j}\lambda_i^2  \lambda_j^2$, the second elementary symmetric polynomial in the eigenvalues of the strain tensor of $\varphi$. Thus $\ov P \equiv cst.$ leads us to the "pure" $\sigma_2$-variational problem introduced in \cite{eell} and discussed e.g. in \cite{sac, slo, cri}.

Using the results in \cite{slo, cri}, it is immediate to obtain the Euler-Lagrange equations associated to the $\sigma_2$-energy with potential:

\begin{pr} \label{sig2pot}
A $C^2$ map $\varphi: M \to N$ is $\sigma_2$-critical with potential $\ov{P}$ if and only if it satisfies the Euler-Lagrange equations
$$
\tau_{\sigma_2}(\varphi) - (\gr \ov{P}) \circ \varphi = 0,
$$
where $\tau_{\sigma_2}(\varphi)$ is the $\sigma_2$-\emph{tension field} of $\varphi$ given by:
$$\tau_{\sigma_2}(\varphi) = \tr \nabla(\abs{\dif \varphi}^2 \dif \varphi-\dif \varphi \circ \mathfrak{C}_\varphi).$$
\end{pr}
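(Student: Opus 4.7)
The plan is to exploit the linearity of the first variation: the energy functional splits as $\mathcal{E}_{\sigma_2, P}(\varphi, K) = \mathcal{E}_{\sigma_2}(\varphi, K) + \int_K \bar P \circ \varphi \, \nu_g$, so its first variation at $\varphi$ is the sum of the first variations of the two summands. Only the potential term requires new computation; the $\sigma_2$-part is handled by quoting the Euler-Lagrange equation established in \cite{slo, cri}, which provides
$$
\frac{d}{ds}\Big\lvert_{s=0} \mathcal{E}_{\sigma_2}(\varphi_s, K) = -\int_K \langle \tau_{\sigma_2}(\varphi), V \rangle_h \, \nu_g,
$$
with $V = \partial_s \varphi_s|_{s=0}$ the variation field along $\varphi$ (a section of $\varphi^{-1} TN$), and $\tau_{\sigma_2}(\varphi) = \tr \nabla(|d\varphi|^2 d\varphi - d\varphi \circ \mathfrak{C}_\varphi)$.

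Next, I would compute the variation of the potential term by the chain rule: at each point $x\in K$,
$$
\frac{d}{ds}\Big\lvert_{s=0} \bar P(\varphi_s(x)) = d\bar P_{\varphi(x)}(V(x)) = \langle (\gr \bar P)\circ\varphi, V\rangle_h (x),
$$
so that, since differentiation in $s$ commutes with integration on the compact domain $K$ (with $V$ compactly supported in the interior of $K$), one obtains
$$
\frac{d}{ds}\Big\lvert_{s=0} \int_K \bar P \circ \varphi_s \, \nu_g = \int_K \langle (\gr \bar P)\circ\varphi, V\rangle_h \, \nu_g.
$$
The regularity assumptions ($\varphi\in C^2$, $\bar P\in C^1$) are precisely what is needed to justify these manipulations and to make sense of $\tau_{\sigma_2}(\varphi)$ as a continuous section.

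Summing the two contributions gives
$$
\frac{d}{ds}\Big\lvert_{s=0} \mathcal{E}_{\sigma_2, P}(\varphi_s, K) = -\int_K \langle \tau_{\sigma_2}(\varphi) - (\gr \bar P)\circ\varphi,\, V\rangle_h \, \nu_g.
$$
Requiring this to vanish for every compact $K$ and every smooth variation field $V$ supported in the interior of $K$, the fundamental lemma of the calculus of variations forces the pointwise identity $\tau_{\sigma_2}(\varphi) - (\gr \bar P)\circ\varphi = 0$, which is the stated Euler-Lagrange equation; the converse implication is immediate by reading the computation backwards.

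Honestly, there is no real obstacle here: the substantive content lies in the formula for $\tau_{\sigma_2}(\varphi)$ derived in \cite{slo, cri}, and adding a potential term is a textbook calculation. The only mild subtlety is the bookkeeping that ensures the variation field $V$ lies in $\Gamma(\varphi^{-1}TN)$ so that the pairing $\langle\,\cdot\,, V\rangle_h$ makes sense fibrewise, and that $\gr \bar P$ (a vector field on $N$) is evaluated along $\varphi$ before being paired with $V$.
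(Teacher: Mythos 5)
Your proposal is correct and matches the paper's intent: the paper gives no explicit proof, stating only that the result is immediate from the first variation formula for the pure $\sigma_2$-energy in the cited references, and your argument is precisely the expected expansion (additivity of the first variation, chain rule for the potential term, and the fundamental lemma of the calculus of variations). The signs are consistent with the stated Euler--Lagrange equation, so nothing is missing.
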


\medskip

In the following we shall mainly consider mappings $\varphi: (M^3, g) \to (N^2, h)$ with one-dimensional fibres into a compact Riemann surface with orthogonal complex structure $J$ and area (or K\"ahler) 2-form $\omega(\cdot, \cdot)=h(\cdot, J\cdot)$. With $M$ either euclidean $\RR^3$ or compact, this is the case typically occurring in the \textit{strongly coupled Faddeev-Skyrme model with potential} \cite{ada, fos}. Since a quadratic potential adds a mass term to the linearization of the model about the vacuum, the potential is also called \textit{mass term}.

\begin{re}
As $\dim N =2$, an \emph{equivalent} formulation of the action functional \eqref{si2pot} in terms of an area $2$-form on $N$ is 
\begin{equation*}
\Ee_{\sigma_2, P}(\varphi)=\frac{1}{2}\int_M \{\abs{\varphi^* \omega}^2 + 2 \ov{P} \circ \varphi \}\nu_g.
\end{equation*}
The associated Euler-Lagrange equations become $($\cite[Remark 2.11]{sve}$)$
\begin{equation}\label{om2poteqs}
J\dif \varphi\big((\delta\varphi^* \omega)^\sharp \big) + (\gr \ov{P}) \circ \varphi = 0.
\end{equation}
\end{re}

Since the rank cannot drop locally, at any regular point one can define the \textit{mean curvature} of the fibres / of $\VV$, by $\mu^\VV=\nabla_U U$, with $U$ being a local unit vertical vector field, and we introduce the vector field
\begin{equation}
T_\varphi = \mu^{\VV}-\gr^{\HH}(\ln \abs{\lambda_{1}\lambda_{2}}),
\end{equation}
where $\gr^{\HH}$ is the horizontal part of the gradient, and $\lambda_{1}^2$, $\lambda_{2}^2$ are the non-zero eigenvalues of $\Cc_\varphi$ (or equivalently of $\varphi^*h$ with respect to $g$).
 
\begin{lm}\label{taulem}
The $\sigma_2$-tension field of a mapping $\varphi: (M^3, g) \to (N^2, h)$ at any regular point is given by
\begin{equation}\label{taueigen}
\tau_{\sigma_2}(\varphi)=-\dif \varphi\big(\lambda_{2}^{2}g(T_\varphi, E_1)E_1 + \lambda_{1}^{2}g(T_\varphi, E_2)E_2\big),
\end{equation}
where $E_1$, $E_2$ are unit (orthogonal) eigenvectors of $\Cc_\varphi$ corresponding to the non-zero eigenvalues $\lambda_1^2$ and $\lambda_2^2$, respectively.
\end{lm}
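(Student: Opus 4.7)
My plan is to bypass a direct expansion of $\tr\nabla(\abs{\dif\varphi}^2\dif\varphi - \dif\varphi\circ\Cc_\varphi)$ and instead exploit the equivalent formulation in the Remark. Since both Proposition~\ref{sig2pot} and \eqref{om2poteqs} express the first variation of the same action $\Ee_{\sigma_2, P}$, and the potential $\ov P$ contributes identically to each, the ``no-potential'' parts must agree pointwise on regular points:
\[
\tau_{\sigma_2}(\varphi) \;=\; -J\dif\varphi\bigl((\delta\varphi^*\omega)^\sharp\bigr).
\]
Hence the lemma reduces to computing $\delta\varphi^*\omega$ at a regular point and evaluating $J\dif\varphi$ on its dual.

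I would then pick a local orthonormal frame $\{E_1, E_2, U\}$ with $E_1, E_2$ horizontal unit eigenvectors of $\Cc_\varphi$ for the eigenvalues $\lambda_1^2, \lambda_2^2$, and $U$ a unit vertical vector. Orienting $N$ so that $\{\dif\varphi(E_1)/\lambda_1, \dif\varphi(E_2)/\lambda_2\}$ is positively oriented, one gets
\[
J\dif\varphi(E_1) = \frac{\lambda_1}{\lambda_2}\dif\varphi(E_2),\qquad J\dif\varphi(E_2) = -\frac{\lambda_2}{\lambda_1}\dif\varphi(E_1),
\]
while $\varphi^*\omega = \lambda_1\lambda_2\, E_1^\flat\wedge E_2^\flat$ and therefore $*\varphi^*\omega = \lambda_1\lambda_2\, U^\flat$. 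Expanding
\[
\dif(\lambda_1\lambda_2\, U^\flat) = \dif(\lambda_1\lambda_2)\wedge U^\flat + \lambda_1\lambda_2\,\dif U^\flat,
\]
the first summand contributes along $\gr^\HH(\ln\abs{\lambda_1\lambda_2})$, while a short computation using $g(U,U)=1$ and $g(U,E_i)=0$ gives $\dif U^\flat(E_i, U) = -g(\mu^\VV, E_i)$, yielding the mean-curvature contribution $\mu^\VV = \nabla_U U$ in the second summand. A residual purely vertical piece, proportional to $g(U,[E_1,E_2])U^\flat$, will appear but drop out upon application of $\dif\varphi$. Collecting terms via the Hodge star, I expect
\[
\delta\varphi^*\omega = \lambda_1\lambda_2\bigl(g(T_\varphi, E_2)E_1^\flat - g(T_\varphi, E_1)E_2^\flat\bigr) + (\text{vertical 1-form}),
\]
with $T_\varphi$ as defined in the statement. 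Raising the index and applying $J\dif\varphi$, the vertical term is annihilated and the factors $\lambda_1/\lambda_2$, $\lambda_2/\lambda_1$ combine with $\lambda_1\lambda_2$ to deliver exactly the coefficients $\lambda_{\bar i}^2$ in \eqref{taueigen}, with the sign coming out as $-\dif\varphi(\lambda_2^2 g(T_\varphi, E_1)E_1 + \lambda_1^2 g(T_\varphi, E_2)E_2)$.

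The main obstacle will be sign bookkeeping: the codifferential convention ($\delta = \pm *\dif*$), the signs of $*(E_i^\flat\wedge U^\flat)$ in the chosen orientation, and the sign in $\dif U^\flat$ must all align so that the horizontal gradient and mean curvature contributions assemble into the specific combination $T_\varphi = \mu^\VV - \gr^\HH(\ln\abs{\lambda_1\lambda_2})$ with the overall minus sign prescribed in \eqref{taueigen}. An alternative, but messier, route would be a direct expansion of $\tr\nabla(\abs{\dif\varphi}^2\dif\varphi - \dif\varphi\circ\Cc_\varphi)$ in the adapted frame: here the endomorphism $\abs{\dif\varphi}^2\mathrm{Id}-\Cc_\varphi$ has eigenvalues $\lambda_2^2, \lambda_1^2, \lambda_1^2+\lambda_2^2$, so the vertical summand $-\dif\varphi\bigl((\abs{\dif\varphi}^2\mathrm{Id}-\Cc_\varphi)\nabla_U U\bigr)$ supplies the $\mu^\VV$ contribution instantly, while the eigenvalue derivatives and second fundamental form terms must be carefully reorganized to reproduce the $\gr^\HH(\ln\abs{\lambda_1\lambda_2})$ contribution.
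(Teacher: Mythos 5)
Your argument is correct in outline, but it reaches \eqref{taueigen} by a genuinely different route from the paper. The paper's proof works entirely on the ``stress-energy'' side: it invokes the identity $h(\tau_{\sigma_2}(\varphi),\dif\varphi)=-\di S_{\sigma_2}(\varphi)$ from \cite{cri}, observes that at a regular point $S_{\sigma_2}(\varphi)=\tfrac{1}{2}\lambda_1^2\lambda_2^2(g^\VV-g^\HH)$, computes $(\di S_{\sigma_2}(\varphi))(X)=g(\lambda_1^2\lambda_2^2\nabla_UU-\tfrac12\gr^\HH(\lambda_1^2\lambda_2^2),X)$ for horizontal $X$, and then reads off the components of $\tau_{\sigma_2}(\varphi)$ in the orthonormal basis $\{\dif\varphi(E_1)/\lambda_1,\dif\varphi(E_2)/\lambda_2\}$ of $T_{\varphi(x)}N$. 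You instead pass to the K\"ahler-form formulation, identify $\tau_{\sigma_2}(\varphi)=-J\dif\varphi\bigl((\delta\varphi^*\omega)^\sharp\bigr)$ by uniqueness of the $L^2$-representative of the first variation (legitimate here because $\dim N=2$ makes the two functionals literally equal, and matching the $\gr\ov P$ contribution fixes the overall sign; this identity is in effect \cite[Remark 2.11]{sve}, which the paper cites), and then compute $\delta\varphi^*\omega=\pm*\dif(\lambda_1\lambda_2\,U^\flat)$ in the adapted frame. Your frame computations check out: $\varphi^*\omega=\lambda_1\lambda_2\,E_1^\flat\wedge E_2^\flat$, $\dif U^\flat(E_i,U)=-g(\mu^\VV,E_i)$, the $g(U,[E_1,E_2])$ term is killed by $\dif\varphi$, and the factors $\lambda_1/\lambda_2$, $\lambda_2/\lambda_1$ from $J\dif\varphi(E_i)$ recombine with $\lambda_1\lambda_2$ to give exactly the cross-coefficients $\lambda_2^2$, $\lambda_1^2$ of \eqref{taueigen}. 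What each approach buys: the paper's route produces the explicit tensor $S_{\sigma_2}(\varphi)$, which is then reused verbatim in the proof of Proposition \ref{main} to match the fluid momentum-flux tensor; your route makes the vector field $V=(*\varphi^*\omega)^\sharp=\lambda_1\lambda_2U$ and (essentially) its curl appear already at this stage, so the fluid interpretation is visible earlier, at the cost of the sign bookkeeping for $\delta=\pm*\dif*$ and the orientation of $\{U,E_1,E_2\}$ that you rightly flag as the only delicate point.
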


\begin{proof} According to \cite{cri}, we have the identity
\begin{equation}\label{ide}
h(\tau_{\sigma_2}(\varphi), \dif \varphi)=-\di S_{\sigma_2}({\varphi}) ,
\end{equation}
where $S_{\sigma_2}(\varphi)=\frac{1}{2}\sigma_2(\varphi)g - \varphi^* h \circ \chi_{1}(\varphi)$ is the $\sigma_2$-\textit{stress-energy tensor} defined in terms of the first Newton tensor:
\begin{equation*}
\chi_1(\varphi) = \abs{\dif \varphi}^2 Id_{TM} - \Cc_{\varphi}.
\end{equation*}
We can check that in our case, at any regular point $x \in M$, 
\begin{equation}\label{stress2}
S_{\sigma_2}(\varphi)=\tfrac{1}{2}\lambda_{1}^2\lambda_{2}^2(g^\VV -g^\HH).
\end{equation}
so that $(\di S_{\sigma_2}(\varphi))(X)=g(\lambda_{1}^2\lambda_{2}^2 \nabla_U U - \tfrac{1}{2}\gr^\HH (\lambda_{1}^2\lambda_{2}^2), \ X)$ for any horizontal vector field $X$.
At $x$, let $E_1$, $E_2$ be unit (orthogonal) eigenvectors of $\Cc_\varphi$ corresponding to the non-zero eigenvalues $\lambda_1^2$ and $\lambda_2^2$, respectively. Since $\{ \frac{1}{\lambda_{1}} \dif \varphi(E_1) , \frac{1}{\lambda_{2}} \dif \varphi(E_2) \}$ form an orthonormal basis of $T_{\varphi(x)}N$, developing $\tau_{\sigma_2}(\varphi)_x$ in this basis and using \eqref{ide} gives the result.
\end{proof}

\begin{re}[Consistent diagonalization]\label{consist}
In smooth setting, according to \cite[Lemma 2.3]{pan}, at any point of a dense open subset of $M$, we have a local orthonormal frame of eigenvector fields $\{ U, E_1, E_2 \}$ of the Cauchy-Green tensor $\Cc_\varphi$, corresponding to the eigenvalues $\{ 0, \lambda_1^2, \lambda_2^2 \}$.

In this case the proof of Lemma \ref{taulem} can be obtained along the lines of \cite[Corollary 3.1]{slo}.
\end{re}

\begin{co}\label{eqs2}
For a $C^2$ mapping $\varphi: (M^3, g) \to (N^2, h)$, the $\Ee_{\sigma_2 , P}$-Euler-Langrange equations at regular points are equivalent with
\begin{equation}\label{s2pot}
\lambda_{1}^2 \lambda_{2}^2 \left(\mu^{\VV}  -\gr^{\HH}(\ln \abs{\lambda_{1}\lambda_{2}})\right) =  -\gr (\ov{P} \circ \varphi),
\end{equation}
and, if the $\Ee_{\sigma_2 , P}$-Euler-Langrange equations are satisfied everywhere on $M$, then the critical points of $\varphi$ are also critical points of $P=\ov{P} \circ \varphi$. 

In particular, if an almost submersion $\varphi$ has minimal fibres $($i.e. $\mu^{\VV}=0 )$ and $\frac{1}{2}\lambda_{1}^2\lambda_{2}^2=\ov P\circ\varphi$, then $\varphi$ is $\sigma_2$-critical with potential $\ov P$.
\end{co}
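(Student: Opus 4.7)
The plan is to pull back the Euler-Lagrange equation from $\varphi^{-1}TN$ to $TM$ by applying $(\dif\varphi)^t$, invoke Lemma \ref{taulem} to compute the left-hand side explicitly, and match it against $\gr P = (\dif\varphi)^t(\gr\ov P\circ\varphi)$ obtained from the chain rule. The remaining two assertions will follow quickly from this main computation.

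Concretely, at a regular point $x$, composing the formula of Lemma \ref{taulem} with $(\dif\varphi)^t$ produces $\Cc_\varphi = (\dif\varphi)^t\circ\dif\varphi$ acting on the bracketed combination of $E_1,E_2$; since these are eigenvectors of $\Cc_\varphi$ with eigenvalues $\lambda_1^2,\lambda_2^2$, the expression collapses to $-\lambda_1^2\lambda_2^2\bigl(g(T_\varphi,E_1)E_1 + g(T_\varphi,E_2)E_2\bigr)$. A key observation is that $T_\varphi$ is horizontal: $\mu^\VV=\nabla_U U$ is perpendicular to the (one-dimensional, hence unit-speed) fibre, and $\gr^\HH$ is horizontal by construction, so $\{E_1,E_2\}$ is an orthonormal basis of $\HH$ at $x$ and the parenthesized combination equals $T_\varphi$ itself. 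Matching with $\gr P$ yields \eqref{s2pot}. For the converse, I will note that the formula of Lemma \ref{taulem} lets one reconstruct $\tau_{\sigma_2}(\varphi)$ from $T_\varphi$ by applying $\dif\varphi$, so \eqref{s2pot} and the EL equation carry the same information at regular points.

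For the critical-point claim I plan to use the globally valid identity $\gr P = (\dif\varphi)^t\tau_{\sigma_2}(\varphi)$ that follows from EL together with the chain rule. When $\dif\varphi_x=0$, the vanishing of $\gr P$ at $x$ is immediate. At a rank-one critical point I would recast the regular-point relation as $\lambda_1^2\lambda_2^2 T_\varphi = \sigma_2\,\mu^\VV - \tfrac12\gr^\HH\sigma_2$, with $\sigma_2 = \abs{\wedge^2\dif\varphi}^2$ a smooth function on $M$ vanishing on $M_\varphi$, and then invoke continuity of both sides of the EL equation (a consequence of $\varphi\in C^2$ and $\ov P\in C^1$) to conclude that $\gr P$ must vanish at $x$ as well. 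I expect this to be the main obstacle, since the horizontal distribution degenerates exactly where the non-trivial eigenvalue acquires multiplicity, and the boundedness of $\sigma_2\mu^\VV$ in the limit needs careful justification.

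The ``In particular'' assertion is then a direct substitution. Setting $\mu^\VV=0$ in \eqref{s2pot} reduces it to $-\tfrac12\gr^\HH(\lambda_1^2\lambda_2^2)=-\gr P$, and since $P=\ov P\circ\varphi$ is constant along fibres we have $\gr P = \gr^\HH P$, so the hypothesis $\tfrac12\lambda_1^2\lambda_2^2 = \ov P\circ\varphi$ makes \eqref{s2pot} hold at every regular point. The almost-submersion hypothesis makes the regular set dense in $M$, and continuity of both $\tau_{\sigma_2}(\varphi)$ and $\gr\ov P\circ\varphi$ then extends the EL equation to all of $M$, so $\varphi$ is $\sigma_2$-critical with potential $\ov P$.
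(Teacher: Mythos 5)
Your treatment of the regular points and of the final ``in particular'' clause is sound and essentially coincides with the paper's: applying $(\dif\varphi)^t$ to the Euler--Lagrange equation and using that $E_1,E_2$ are $\Cc_\varphi$-eigenvectors spanning $\HH$ while $T_\varphi$ is horizontal is just the dual of the paper's expansion of $\gr\ov P_{\varphi(x)}$ in the orthonormal basis $\{\tfrac{1}{\lambda_i}\dif\varphi(E_i)\}$ of $T_{\varphi(x)}N$, and no information is lost because $(\dif\varphi_x)^t$ is injective on $T_{\varphi(x)}N$ at a regular point.

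The genuine gap is exactly where you anticipated it: the rank-one critical points. The limiting argument you sketch cannot be completed as stated. First, a rank-one point need not be approachable by regular points at all (the set $\{\rank\dif\varphi\le 1\}$ may have nonempty interior), so there may be nothing to take a limit of. Second, even along a sequence of regular points converging to $x$, the term $\sigma_2\,\mu^\VV$ has no a priori bound: $\sigma_2\to 0$, but $\mu^\VV=\nabla_U U$ is built from the unit vertical field $U$, whose direction ceases to be determined precisely when $\ker\dif\varphi$ jumps to dimension two, and nothing in the $C^2$ hypothesis controls its covariant derivative near such a point. (The term $\tfrac12\gr^\HH\sigma_2$ is harmless, since $\sigma_2$ is a nonnegative $C^1$ function vanishing at $x$.) The paper sidesteps all of this with a pointwise algebraic argument: by the equivalent form \eqref{om2poteqs} of the Euler--Lagrange equations one has $\gr\ov P_{\varphi(x)}=-J\dif\varphi_x\bigl((\delta\varphi^*\omega)^\sharp\bigr)$, whence for every $X\in T_xM$,
$$X(P)_x=h\bigl(\gr\ov P_{\varphi(x)},\dif\varphi_x(X)\bigr)=\varphi^*\omega_x\bigl((\delta\varphi^*\omega)^\sharp,X\bigr),$$
and the right-hand side vanishes identically as soon as $\rank\dif\varphi_x\le 1$, because then $\varphi^*\omega_x=0$. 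This covers rank-zero and rank-one points uniformly and requires no density or continuity considerations; you should replace your limiting argument by this observation.
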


\begin{proof}
By Proposition \ref{sig2pot}, $\varphi$ is $\sigma_2$-critical with potential $\ov{P}$ iff $\tau_{\sigma_2}(\varphi)_x = \gr \ov{P}_{\varphi(x)}$ for any $x \in M$. At regular points, $x\in M\setminus M_\varphi$, using the same basis as in Lemma \ref{taulem}, we have $\gr \ov{P}_{\varphi(x)}=  
\frac{1}{\lambda_{1}^{2}} E_1(\ov{P} \circ \varphi)\dif \varphi(E_1)
+\frac{1}{\lambda_{2}^{2}} E_2(\ov{P} \circ \varphi)\dif \varphi(E_2)$,
which, combined with \eqref{taueigen}, yields Equation \eqref{s2pot}. 

Now let $x \in M_\varphi$ be a critical point, i.e. $\rank \dif \varphi_x \leq 1$. Then  $\varphi^* \omega_x=0$, so for any $X \in T_xM$ we have $X (\ov{P} \circ \varphi)_{x}=h(\gr \ov{P}_{\varphi(x)}, \dif \varphi_x(X))=-h(J\dif \varphi_x\big((\delta\varphi^* \omega)^\sharp \big), \dif \varphi_x(X))=\varphi^* \omega_x((\delta\varphi^* \omega)^\sharp, X)=0$, where the second equality is given by the Euler-Lagrange equations \eqref{om2poteqs}. This proves that $x$ is critical also for $P$, i.e. $\gr (\ov{P} \circ \varphi)_{x}=0$ for any $x \in M_\varphi$.

The second statement is immediate.
\end{proof}

\section{Field-Fluid Correspondence}
In this section we establish the correspondence between steady solutions of the Euler system \eqref{mhd} and field solutions of the $\sigma_2$-variational problem with potential. Then we point out some direct applications of this duality.

\begin{pr}\label{main}
If the $C^2$ mapping $\varphi:(M^3, g) \to (N^2, h)$ is $\sigma_2$-critical with potential $\ov{P}$, and $\omega$ is the area 2-form on $N$ induced by $h$, then the vector field $V = (\ast \varphi^* \omega)^\sharp$ satisfies the Euler equations for steady incompressible flows on $M$ with Bernoulli function $P=\ov{P} \circ \varphi$. Conversely if $V$ is a steady incompressible Euler solution on $M$, then it exists locally a $\sigma_2$-critical submersion with potential into some surface $(N,h)$ with fibres tangent to $V$.
\end{pr}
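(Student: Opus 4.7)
The plan is to prove the two implications separately, leveraging the divergence formulation \eqref{fluidstress} together with the $\sigma_2$-stress-energy tensor \eqref{stress2} in the forward direction, and a local Frobenius-type construction for the converse.

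For the forward direction, setting $V=(\ast\varphi^*\omega)^\sharp$ is equivalent, in a $3$-manifold, to $\imath_V\nu_g=\varphi^*\omega$. This identity immediately shows that $V$ is vertical (since $\imath_V\imath_V\nu_g=0$) and that $\abs{V}^2=\abs{\varphi^*\omega}^2=\lambda_1^2\lambda_2^2$. Divergence-freeness follows from Cartan's formula: $(\di V)\nu_g=\dif\imath_V\nu_g=\dif\varphi^*\omega=\varphi^*\dif\omega=0$, since $\omega$ is a top-degree form on $N^2$. With $P:=\ov{P}\circ\varphi$ and the identification $\abs{V}^2=\lambda_1^2\lambda_2^2$, formula \eqref{stress2} recasts the fluid momentum-flux tensor of $\S 2$ as $T=S_{\sigma_2}(\varphi)+Pg$. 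Identity \eqref{ide} combined with the Euler--Lagrange equation $\tau_{\sigma_2}(\varphi)=(\gr\ov{P})\circ\varphi$ then gives $\di S_{\sigma_2}(\varphi)=-\dif P$ as a $1$-form, while $\di(Pg)=\dif P$; hence $\di T=0$, which together with $\di V=0$ is equivalent to \eqref{mhd}.

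For the converse, work on a neighborhood where $V\neq 0$ and consider $\Omega:=\imath_V\nu_g$. The same Cartan computation shows $\dif\Omega=0$, while $\ker\Omega=\mathrm{Span}(V)$. The integral curves of $V$ form a simple $1$-dimensional foliation locally, so after possibly shrinking the domain one obtains a submersion $\varphi$ onto a $2$-manifold $N$ whose fibres are precisely those curves. Since $\Omega$ is closed, annihilates $\ker\dif\varphi$, and satisfies $\mathcal{L}_V\Omega=0$, it descends to a nowhere vanishing $2$-form $\omega$ on $N$ with $\varphi^*\omega=\Omega$. Choose any Riemannian metric $h$ on $N$ whose area form is $\omega$; by Bernoulli's theorem $V(P)=0$, so $P$ descends to a function $\ov{P}$ on $N$, which may be shifted by a constant to be non-negative.

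It remains to verify equation \eqref{s2pot}. By construction $\lambda_1^2\lambda_2^2=\abs{V}^2$, and a short calculation with $U=V/\abs{V}$ gives $\mu^\VV=\abs{V}^{-2}(\nabla_V V)^\HH$, because $\nabla_U U$ is automatically orthogonal to $U$. The Euler equation $\nabla_V V=-\gr p=-\gr P+\gr(\tfrac{1}{2}\abs{V}^2)$, together with $\gr P$ being horizontal (again by Bernoulli), yields $(\nabla_V V)^\HH=-\gr P+\gr^\HH(\tfrac{1}{2}\abs{V}^2)$. Substituting and using $\abs{V}^2\gr^\HH\ln\abs{\lambda_1\lambda_2}=\gr^\HH(\tfrac{1}{2}\abs{V}^2)$ collapses the left-hand side of \eqref{s2pot} to $-\gr P=-\gr(\ov{P}\circ\varphi)$, as required. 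The delicate step is this converse: the area form $\omega$ on $N$ is pinned down by $V$, but the conformal class of $h$ is free; this freedom is harmless precisely because, as observed in the remark following Proposition \ref{sig2pot}, the $\sigma_2$-energy depends on $h$ only through its area $2$-form, so the Euler--Lagrange equation is insensitive to the remaining choice.
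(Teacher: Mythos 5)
Your proof is correct, and the two directions relate to the paper's argument differently. For the converse you follow essentially the same path as the paper: descend $\imath_V\nu_g$ to an area form $\omega$ on the local leaf space, pick any metric $h$ with area form $\omega$, descend $P$ via Bernoulli, and translate the Euler equation into Equation \eqref{s2pot}; your closing remark that only the area form of $h$ matters is exactly why the paper's phrase ``choosing an associated metric'' is harmless. For the forward direction, however, you take the route the paper only sketches in one sentence (``another way of proving this part\dots''): you identify the momentum-flux tensor with $S_{\sigma_2}(\varphi)+Pg$ and conclude $\di T=0$ from the identity \eqref{ide} together with the Euler--Lagrange equation, whereas the paper's primary argument works in the adapted frame, reads off the first equation of \eqref{euler+} from \eqref{s2pot}, checks $\di V=0$ via $(\di S_{\sigma_2}(\varphi))(U)=0$, and then treats the critical set separately through Corollary \ref{eqs2}. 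Your version buys two things: the divergence-freeness of $V$ becomes a purely topological Cartan-formula computation ($\dif\varphi^*\omega=\varphi^*\dif\omega=0$) valid on all of $M$, and the critical points of $\varphi$ need no separate treatment since $\di S_{\sigma_2}(\varphi)=-\dif(\ov P\circ\varphi)$ holds everywhere the Euler--Lagrange equations do. The one step you should make explicit is that the algebraic identity $S_{\sigma_2}(\varphi)=V^\flat\otimes V^\flat-\tfrac12\abs{V}^2 g$ (equivalently $T=S_{\sigma_2}(\varphi)+Pg$) holds at \emph{every} point, not just at regular ones where \eqref{stress2} is stated: at a point with $\rank\dif\varphi\le 1$ both sides vanish (one checks $\varphi^*h\circ\chi_1(\varphi)=0$ and $\sigma_2(\varphi)=0$ there, while $V=0$), so the identity extends and you may legitimately differentiate it on all of $M$. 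With that sentence added, the argument is complete.
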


\begin{proof}
Let $\varphi$ be a $\sigma_2$-critical map with potential $\ov{P}$. Around a regular point $x$, let $U$ be the (locally defined) unit vertical vector field. Then the vector field $V = (\ast \varphi^* \omega)^\sharp$ can be rewritten as $V = \lambda_{1}\lambda_{2}U$.  Since $\mu^\VV = \nabla_U U$, Equation \eqref{s2pot} shows that $V$ satisfies the first fluid equation in \eqref{euler+}. The divergence-free condition for $V$, which reads $\di U + U(\ln \abs{\lambda_{1}\lambda_{2}})=0$, is also satisfied since it turns out to be equivalent to  $(\di S_{\sigma_2}(\varphi))(U)=0$, which is true by \eqref{ide}. Another way of proving this part is to simply compare the momentum-flux tensor \eqref{fluidstress} of the fluid (see its alternative expression) with the $\sigma_2$-stress-energy tensor \eqref{stress2} of the field, adjusted by the presence of the potential term, and then to apply \eqref{ide}.

At critical points $x \in M_\varphi$, by construction we have $V_x=0$ and by Corollary \ref{eqs2} we have $\gr P_x =0$, so the steady incompressible Euler equations are satisfied also at these points. 

Conversely, let $V$ be a solution of the equations \eqref{euler+} for some Bernoulli function $P$. In a neighbourhood of a point $x \in M$ where $V_x \neq 0$, let $\varphi$ denote the local projection along $V$ into some 2-manifold $N$. Denote by $\vartheta$ the 1-form dual to $V$. The continuity equation $\di V=0$ implies
$$\mathcal{L}_V \left(\ast \vartheta \right)=0,$$
so that $\ast \vartheta$ is a basic 2-form and it descends to an area 2-form $\omega$ on $N$. Choosing an associated metric $h$ for the symplectic structure $\omega$, we can check that $\abs{\varphi^*\omega}=\lambda_1 \lambda_2=\abs{V}$, where $\lambda_{1}^2$, $\lambda_{2}^2$ are the non-zero eigenvalues of $\varphi^* h$ with respect to $g$. Moreover since $V(P)=0$, $P$ is a projectable function: $P=\ov{P} \circ \varphi$ for some $\ov{P}$ defined on $N$. Therefore the Euler first equation in \eqref{euler+} translates into $\lambda_{1}^2 \lambda_{2}^2 \ T_\varphi =  -\gr (\ov{P} \circ \varphi)$ which, by Corollary \ref{eqs2}, implies that $\varphi$ is $\sigma_2$-critical with potential $\ov P$. 
\end{proof}

%Notice that if either $\rank \dif \varphi \in \{0, 1\}$, or if $\rank \dif \varphi$ is locally constant on a dense subset (as for smooth mappings), or $\varphi$ is an almost submersion we can strengthen the first assertion above: $\varphi:(M^3, g) \to (N^2, h)$ is $\sigma_2$-critical with potential $\ov{P}$, if and only if the vector field $V = (\ast \varphi^* \omega)^\sharp$ satisfies the Euler equations for steady incompressible flows on $M$ with Bernoulli function $P=\ov{P} \circ \varphi$.

We may see the above result either as a variational characterization of steady incompressible Euler flows (see \cite[p. 75]{arn} and \cite{mars} for an alternative) or as a new physical interpretation of the strongly coupled Faddeev-Skyrme model.

The correspondence between steady Euler flows and $\sigma_2$-critical submersions with potential is only local. Those steady flows for which the correspondence happens to be global will be called $S$-integrable, where $S$ is standing for 'submersion' or 'simple', since the 1-dimensional (singular) foliation given by the integral curves of $V$ (allowed to have zeros) is simple.

\begin{de}
A vector field $V$ on a manifold $M$ is called $S$-\emph{integrable} if at any regular point it is tangent to the fibres of a map $\varphi: M^m \to N^{m-1}$.
\end{de}

\subsection{Forced Euler flows and solutions of the full Faddeev-Skyrme model with mass term}
Including the standard Dirichlet density into the $\sigma_2$-energy functional yields ($\kappa \in \RR_+$):
\begin{equation}\label{si12pot}
\Ee_{\sigma_{1,2} , P}(\varphi)=\frac{1}{2}\int \{\kappa \abs{\dif \varphi}^2 + \abs{\wedge^2 \dif \varphi}^2 + 2 \ov{P} \circ \varphi \}\nu_g ,
\end{equation}
that gives rise to the corresponding notion of $\sigma_{1,2}$-\textit{critical mapping} (\cite{slo}) with potential, as in the \textit{full Faddeev-Skyrme model} with mass term.

By a completely analogous argument as above, using the stress-energy tensor
$S_{\sigma_1}(\varphi)=\frac{1}{2}\abs{\dif \varphi}^2 \, g -\varphi^*h$ associated to the Dirichlet energy, we obtain

\begin{pr}
If $\varphi: (M^3 , g) \to (N^2, h)$ is a $\sigma_{1,2}$-critical map with potential $\ov P$and $\omega$ is the area form on $N$, then $V=\ast \varphi^* \omega$ satisfies the \emph{steady forced Euler equations}:
$$\nabla_V V = - \gr p + F, \quad \di V = 0,$$
where $F=\kappa(\di \Cc_\varphi-\frac{1}{2}\gr\abs{\dif \varphi}^2)$ and $p=\ov{P}(\varphi)-\tfrac{1}{2}\abs{V}^2$. In particular, $F\perp V$.
\end{pr}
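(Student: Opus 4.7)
The plan is to mimic the stress-energy divergence strategy from the proof of Proposition \ref{main}, now with the Dirichlet contribution included. First, I would write the Euler-Lagrange system for the $\sigma_{1,2}$-critical with potential problem as $\kappa\,\tau_{\sigma_1}(\varphi) + \tau_{\sigma_2}(\varphi) = (\gr \ov{P})\circ\varphi$, which follows from Proposition \ref{sig2pot} and the classical Dirichlet Euler-Lagrange equation by linearity. Then I would invoke the stress-energy identity \eqref{ide} in its pointwise form $h(\tau_{\sigma_i}(\varphi),\dif\varphi(X)) = -g(\di S_{\sigma_i}(\varphi), X)$ for all $X$, where $S_{\sigma_1}(\varphi) = \frac{1}{2}|\dif\varphi|^2 g - \varphi^* h$ and $S_{\sigma_2}(\varphi)$ is as in \eqref{stress2}. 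Pairing the Euler-Lagrange equation with $\dif\varphi(X)$ and letting $X$ vary, this condenses into
\[
\kappa\,\di S_{\sigma_1}(\varphi) + \di S_{\sigma_2}(\varphi) + \gr(\ov{P}\circ\varphi) = 0.
\]

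At any regular point of $\varphi$, I would rewrite $S_{\sigma_2}(\varphi) = V^\flat\otimes V^\flat - \tfrac{1}{2}|V|^2 g$, using $V = \lambda_1\lambda_2 U$ (so that $|V|^2 = \lambda_1^2\lambda_2^2$) as in the proof of Proposition \ref{main}, together with the alternative form of the momentum-flux tensor recorded after \eqref{fluidstress}. The incompressibility $\di V = 0$ comes for free from the construction: since $\ast V^\flat = \varphi^*\omega$ and $\omega$ is a top-degree form on $N^2$, we have $\dif(\ast V^\flat) = \varphi^*\dif\omega = 0$, equivalent to $\di V = 0$. With this, the familiar identity $\di(V^\flat\otimes V^\flat) = \nabla_V V$ turns the displayed identity into
\[
\nabla_V V + \gr\big(\ov{P}\circ\varphi - \tfrac{1}{2}|V|^2\big) \;=\; -\kappa\,\di S_{\sigma_1}(\varphi),
\]
so putting $p = \ov{P}(\varphi) - \tfrac{1}{2}|V|^2$ and unpacking $\di S_{\sigma_1}(\varphi) = \tfrac{1}{2}\gr|\dif\varphi|^2 - \di\Cc_\varphi$ (under the metric identification $\varphi^* h \simeq \Cc_\varphi$) yields the claimed forced Euler equation with $F = \kappa(\di\Cc_\varphi - \tfrac{1}{2}\gr|\dif\varphi|^2)$.

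The orthogonality $F \perp V$ is then an immediate consequence of \eqref{ide} applied to $\tau_{\sigma_1}$: since $V$ is vertical and $\dif\varphi(V) = 0$, we obtain $g(\di S_{\sigma_1}(\varphi), V) = -h(\tau_{\sigma_1}(\varphi), \dif\varphi(V)) = 0$, whence $g(F,V) = 0$. I expect the main obstacle to be purely bookkeeping: reconciling the sign conventions between the $h(\tau,\dif\varphi)$--$\di S$ identity and the fluid-mechanical divergence of $T$, and handling critical points of $\varphi$ (where the $g = g^\VV + g^\HH$ splitting degenerates) by continuity from the regular set, exactly as at the end of the proof of Proposition \ref{main}.
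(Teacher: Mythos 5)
Your proposal is correct and follows exactly the route the paper intends: the paper's own ``proof'' is the one-line remark that the result follows ``by a completely analogous argument'' using the Dirichlet stress-energy tensor $S_{\sigma_1}(\varphi)$, i.e.\ precisely the comparison of $\kappa\,\di S_{\sigma_1}+\di S_{\sigma_2}+\gr(\ov P\circ\varphi)=0$ with the momentum-flux divergence that you carry out, with the forcing term $F=-\kappa\,\di S_{\sigma_1}(\varphi)$ and its orthogonality to $V$ coming from verticality of $V$. You have simply filled in the details the paper omits, so nothing further is needed.
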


\subsection{Beltrami flows and $\sigma_2$-critical fields with finite energy} Proposition \ref{main} shows in particular that the "pure" $\sigma_2$-variational problem (with $\ov P \equiv$ cst.) for mappings with 1-dimensional fibres on $M^3$, gives rise to Beltrami flows.

By the standard \emph{Derrick's scaling argument} \cite{der}, the pure $\sigma_2$-energy functional on $\RR^3$ doesn't allow non-trivial critical points with finite energy. This can be now seen as a Liouville property for $S$-integrable Beltrami flows. This allows us to guess that for general Beltrami flows the same property will hold. Indeed, it has been recently proved (\cite{nad}) that: 
\begin{quote}
If a $C^1$ Beltrami field in $\RR^3$ has finite energy $\int_{\RR^3}\abs{V}^2 \nu_g$, then it is identically zero.
\end{quote}.

\subsection{Axisymmetric flows and the rational map ansatz}
The original Faddeev-Skyrme model is defined for fields $\varphi: \RR^3 \to \Ss^2$ satisfying the asymptotic boundary condition $\lim_{\abs{x}\to \infty}\varphi(x) =(0,0,1)$, $x=(x_1, x_2, x_3)\in \RR^3$. The latter allows the one-point compactification  of $\RR^3$ to $\Ss^3=\{(z_0, z_1) \in \CC^2 | \abs{z_0}^2 + \abs{z_1}^2=1\}$, given by $(z_0 , z_1) =$ $\big(\cos f(r)+ \mathrm{i} \frac{x_3}{r}\sin f(r) , \ \frac{x_1 + \mathrm{i} x_2}{r}\sin f(r)\big)$, where $f(0)=\pi$ and $f(\infty)=0$ (\cite{sut}). Therefore a particular form of $\varphi$ may be
$$
(z_0, z_1) \mapsto w=\frac{z_1^\ell}{z_0^k}
$$
where $w=(\varphi_1 + \mathrm{i}\varphi_2)/(1 + \varphi_3) \in \CC$ represents a point of $\Ss^2$ via the stereographic projection. This field ansatz is usually denoted by $\mathcal{A}_{k, \ell}$ and it is a particular realisation of an axially symmetric  field (in cylindrical coordinates $(\rho, \theta, z)$)
\begin{equation}\label{ax}
\varphi(\rho e^{\ii \theta}, \ z)= \big(\sin\Theta(\rho, z) \, e^{\ii (\ell \theta-k \psi(\rho, z))}, \ \cos\Theta(\rho, z)\big)
\end{equation}
where $k, \ell \in \ZZ$. 

\begin{re}
The axial symmetry is usually chosen since it is the maximal one allowing an arbitrary \emph{Hopf invariant} (\cite{ku}; see the definition of this invariant in $\S 4.4$). Notice that surfaces of constant $\Theta$ are homeomorphic to tori. Then, at least for solutions of the strongly coupled model with a potential depending only on $\varphi_3=\cos\Theta(\rho, z)$, this \textit{ansatz} choice is also a particular realization of the 2-dimensional foliation prescribed by the Arnold structural theorem applied to the associated steady Euler flow (cf. the discussion following \eqref{mhd}), which is collinear with $W=k(\partial_\rho \Theta \partial_z \psi - \partial_z \Theta \partial_\rho \psi)\, \partial_\theta-\ell (\partial_z \Theta \, \partial_\rho - \partial_\rho \Theta \, \partial_z) \in \ker \dif \varphi$. Notice that $W$ is an axially simmetric vector field; if moreover $\psi = A(\Theta)$, then its $\partial_\theta$-component vanishes and we say that $W$ (and $\varphi$) has \emph{no swirls}.
\end{re}

According to \cite[Theorem 5.3]{jiu}, there exists no non-trivial $C^1$ axisymmetric without swirls solution of the fluid equations \eqref{euler0} having finite energy $\int_{\RR^3}\abs{V}^2 \nu_g$ and satisfying $\displaystyle \lim_{\abs{x}\to \infty}\abs{V} = 0$ and $\displaystyle  \lim_{\abs{x}\to \infty}p = cst$. Using Proposition \ref{main} applied to $V=(\ast \varphi^* \omega)^\sharp$ corresponding to $\varphi$ in \eqref{ax}, and the above Remark, we get the following 

\begin{pr}\label{noexist}
There exists no finite energy axisymmetric without swirls $C^2$ solutions of the strongly coupled Faddeev-Skyrme model on $\RR^3$ with the potential of the form $P=(1-\varphi_3^a)^b$, $a, b \in \RR_+$.
\end{pr}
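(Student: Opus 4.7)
The plan is to apply the correspondence of Proposition \ref{main} to translate the claim into a non-existence statement for steady axisymmetric incompressible Euler flows on $\RR^3$, then invoke \cite[Theorem 5.3]{jiu}. Suppose, for contradiction, that $\varphi:\RR^3\to\Ss^2$ is a nontrivial $C^2$ axisymmetric solution of the strongly coupled Faddeev-Skyrme model with potential $\ov{P}=(1-\varphi_3^a)^b$ and finite energy $\Ee_{\sigma_2,P}(\varphi)<\infty$. By Proposition \ref{main}, the vector field $V=(\ast\varphi^*\omega)^\sharp$ is a steady incompressible Euler flow on $\RR^3$ with Bernoulli function $P=\ov{P}\circ\varphi$, and I need only verify that $V$ satisfies the hypotheses of \cite[Theorem 5.3]{jiu}.

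Axial symmetry of $V$ is inherited from $\varphi$ through the geometric nature of the construction $\varphi\mapsto(\ast\varphi^*\omega)^\sharp$, which commutes with the isometries of $\RR^3$ fixing the $z$-axis; on the ansatz \eqref{ax} this is manifest since $V$ is proportional to the field $W$ displayed in the remark above. The finite-energy condition transfers cleanly: since $\abs{V}^2=\abs{\varphi^*\omega}^2=\lambda_1^2\lambda_2^2$, one has
\begin{equation*}
\int_{\RR^3}\abs{V}^2\,\nu_g \;=\; \int_{\RR^3}\abs{\varphi^*\omega}^2\,\nu_g \;\leq\; 2\,\Ee_{\sigma_2,P}(\varphi)\;<\;\infty.
\end{equation*}
For the asymptotic conditions, the boundary condition $\varphi(x)\to(0,0,1)$ gives $\ov{P}\circ\varphi=(1-\varphi_3^a)^b\to 0$ as $\abs{x}\to\infty$, while the explicit expression for $V$ on \eqref{ax} together with the decay of $\sin\Theta$ and of $\nabla\Theta,\nabla\psi$ yields $\abs{V}\to 0$, so that $p=P-\tfrac{1}{2}\abs{V}^2$ tends to a constant at infinity as required.

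With these hypotheses verified, \cite[Theorem 5.3]{jiu} forces $V\equiv 0$ on $\RR^3$, hence $\varphi^*\omega\equiv 0$, i.e.\ $\rank\dif\varphi\leq 1$ everywhere. Imposing this on the ansatz \eqref{ax} forces either $\Theta$ or the phase $\ell\theta-k\psi$ to be essentially constant, and combined with the boundary datum $\Theta\to 0$ at infinity this gives $\Theta\equiv 0$, so $\varphi\equiv(0,0,1)$, contradicting nontriviality.

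The main obstacle I foresee is making the pointwise decay $\abs{V}\to 0$ and $p\to\mathrm{const.}$ at infinity fully rigorous from finite $\sigma_2$-energy alone, since $L^2$-integrability does not in general imply pointwise decay. This is where the axisymmetric structure of \eqref{ax}, together with the $C^2$ regularity and the prescribed asymptotics of $\Theta$ and $\psi$, must be used carefully; once this technical point is in place, the rest of the argument is a direct translation of the fluid non-existence result through Proposition \ref{main}.
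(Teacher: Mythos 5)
Your argument is essentially identical to the paper's: the paper likewise deduces the result by feeding $V=(\ast\varphi^*\omega)^\sharp$ into Proposition \ref{main} and then invoking \cite[Theorem 5.3]{jiu}, using the axisymmetry of $V$ inherited from the ansatz \eqref{ax} and the fact that the potential $(1-\varphi_3^a)^b$ vanishes at the vacuum so that $p$ tends to a constant at infinity. The technical point you flag about upgrading finite energy to pointwise decay of $\abs{V}$ and $p$ is left equally implicit in the paper, so your write-up is, if anything, more complete.
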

Recall that $(a, b)=(1,1)$ and $(a, b)=(2,1)$ are standard choices of the potential called \textit{old} and \textit{new baby  Skyrme potentials},  respectively.

\subsection{Topological energy bounds}
Due to the asymptotic boundary  condition $\lim_{\abs{x}\to \infty}\varphi(x) = cst.$, the field configurations in the Faddeev-Skyrme model may be seen topologically as maps  $\Ss^3 \to \Ss^2$.  Recall that for such maps the homotopy classes are indexed by the \textit{Hopf invariant} (or \textit{charge}) that can be computed by the integral formula: $Q(\varphi) = \tfrac{1}{16\pi^2}\int_{\Ss^3} \alpha \wedge \varphi^* \omega \in \pi_3(\Ss^2) \cong \ZZ$, where $\dif \alpha = \varphi^* \omega$, and $\omega$ is the standard area form on $\Ss^2$. The field configurations that minimize the energy in each homotopy class are the most interesting solutions for the Faddeev-Skyrme model, and they are called \textit{hopfions} (in analogy with \textit{skyrmions} for the Skyrme model; by extension any solution of Euler-Lagrange equations may be called in this way).

For null-homologous divergence-free fields $V$ on a compact, connected, oriented Riemannian 3-manifold $M$ with volume form $\nu_g$, a similar invariant (to the action of volume preserving diffeomorphisms) called \textit{helicity} is defined as $H(V)= \tfrac{1}{\pi^2}\int \alpha \wedge \imath_V \nu_g $, where $\dif \alpha = \imath_V \nu_g$ (see \cite[p.121-128]{arn} for more details). It is well known the fact that "helicity bounds energy"  (\cite{arn}):
$$
\int_{M}\abs{V}^2 \nu_g \geq \pi^2 \mu_1 \abs{H(V)},
$$
where $\mu_1>0$ is the first non-zero eigenvalue of the curl operator, and this yields the minimization property of principal eigenvectors of curl. One classical example of such field is the \textit{Hopf vector field} $\xi$ on $\Ss^3$.  Less surprisingly by now, the associated Hopf map was shown \cite{svee} to be a (global) minimizer for $\mathcal{E}_{\sigma_2}$ in its homotopy class. This result makes use of the analogous general lower bound for $\mathcal{E}_{\sigma_2}$ of algebraically inessential maps $\varphi:M \to \Ss^2$, in terms of their Hopf invariant (\cite{spe}): 
$$
\Ee_{\sigma_{2}}(\varphi) \geq 8\pi^2 \mu_1 \abs{Q(\varphi)},
$$
where $\mu_1^2 >0$ is the first non-zero eigenvalue of the Laplacian on coexact 1-forms on the domain; the equality is reached if $\ker \dif \varphi$ is spanned by a linear Beltrami field with $f \equiv \mu_1$.

For the strongly coupled Faddeev-Skyrme model with mass term on $\RR^3$ it has been recently shown \cite{har} that 
$$\Ee_{\sigma_{2} , P}(\varphi) \geq C \abs{Q(\varphi)}^{3/4},$$
where $C$ is a constant depending on $\ov P$. In view of the fluid-field duality, given the Bernoulli function, we obtain another topological bound for the energy $\int_{\RR^3}\abs{V}^2 \nu_g$ of a steady fluid on $\RR^3$ (vanishing at infinity) in terms of its helicity.

\section{Examples}

The aim of this section is to illustrate by explicit constructions the discussion in the previous section. Due to Proposition \ref{noexist} we have been led to consider examples on other spaces than $\RR^3$. The computational details are available in auxiliary files \cite{mathm}.

\begin{ex}
Let $\varphi : \mathbb{R}^2 \times \Ss^1 \to \Ss^2$ be defined by:
$$
\varphi(r e^{\ii \theta}, \ e^{\ii \phi})=\left(\sin\alpha(r)e^{\ii (\theta-\phi)}, \ \cos \alpha(r)\right)
$$
where $\theta, \phi \in [0, 2\pi]$, $r \geq 0$, $\alpha(r)=\arccos\left(1-\tfrac{2}{\sqrt{r^2 + 1}}\right)$ and the metrics considered are the standard ones. We can check that in this case $\abs{\lambda_1 \lambda_2}=\tfrac{2}{r^2 + 1}$ and the unit vector field tangent to the fibres is 
$$
U=\tfrac{1}{\sqrt{r^2 + 1}}(\tfrac{\partial}{\partial \theta}+\tfrac{\partial}{\partial \phi}).
$$

Take $\ov{P}(\varphi_1, \varphi_2, \varphi_3)=\tfrac{1}{16}(1-\varphi_3)^4$, so that
$P(r e^{\ii \theta}, \ e^{\ii \phi})= \tfrac{1}{(r^2 + 1)^2}$. 

Then by direct computation we can establish that
\begin{itemize}
\item  $V=\lambda_1 \lambda_2 U$ is a steady Euler flow with pressure\\ $p=P-\tfrac{1}{2}\abs{V}^2=-\tfrac{1}{(r^2 + 1)^2}$;

\item $\varphi$ is a $\sigma_2$-critical smooth submersion with potential $\ov P$. 
\end{itemize}
Notice moreover that $\lim_{\abs{(x,y)}\to \infty}\varphi(x,y,z)=(0,0,1)$ for any $z$, and that 
$\tfrac{1}{2}\int_{\mathbb{R}^2 \times \Ss^1} \abs{V}^2 \nu_g=
\Ee_{\sigma_2}(\varphi)=8 \pi^2$ (finite).
\end{ex}

It is interesting to notice that the above $V$ extends to a steady Euler flow on $\RR^3$ which is a Beltrami field with respect to a conformally flat metric (cf. \cite[p.134]{ble}). The potential appearing in this example has been considered also in \cite{lpz}.

For further analysis of (winding) Hopfions on $\mathbb{R}^2 \times \Ss^1$, see \cite{fosh, jay} (without potential) and \cite{nitt} (with potential).

\subsection{Examples on the 3-sphere} The Faddeev-Skyrme model (without potential) on $\Ss^3$ was first considered in \cite{wa}. In the following if no metric is specified, then the canonical round metric is considered.

The first standard (already mentioned) example is provided by the Hopf map on $\Ss^3$ which is a (smooth, unit charge) $\Ee_{\sigma_2}$-minimizer in its homotopy class and its associated Hopf vector field $\xi$ which is a ($S$-integrable, unit helicity) Beltrami field.

Steady Euler flows with higher helicities on $\Ss^3$ can be easily constructed by particularizing the class of solutions given in \cite[Example 4.5]{khes}. On the other side, there are several attempts to find higher Hopf charge solutions for the $\sigma_2$-variational problem: \cite{fer, slo, slobo} for the pure $\sigma_2$-energy case and \cite{ada, fos} for the case with potential (\cite{fos} provides numerical solutions). Nevertheless the existing higher charge field configurations we are aware of are \textit{not} classical solutions; they may be weak solutions (in some sense to be defined) or energy minimizers with low regularity. 

In the following we construct some classical solutions on $\Ss^3$ either by considering an adapted potential, or an adapted metric. We begin by recalling the construction in \cite{khes}. Consider $\Ss^3$ parametrized by
$$
(\cos s \cdot e^{\ii \phi_1}, \ \sin s \cdot e^{\ii \phi_2}), \quad s\in [0, \tfrac{\pi}{2}], \quad \phi_{1,2} \in [0, 2\pi],
$$
endowed with the standard round metric $g=\dif s^2 + \cos^2 \!s  \, \dif \phi_1^2 + \sin^2 \!s \, \dif \phi_2^2$. We stress that strictly speaking this parametrization correspond to a choice of local chart coordinates on $\Ss^3$ but we often deal with globally defined objects (when necessary this can be checked by translating  into Cartesian coordinates in $\RR^4$). Let $\xi_{-}=\tfrac{\partial}{\partial \phi_1}-\tfrac{\partial}{\partial \phi_2}$, $\xi_{+}=\tfrac{\partial}{\partial \phi_1}+\tfrac{\partial}{\partial \phi_2}$ be, respectively, the anti-Hopf and \textit{Hopf vector fields} on $\Ss^3$ (they are globally defined, smooth and non-singular). Define 
\begin{equation}\label{khes}
V=f_{-}(\cos^2 s)\xi_{-}+f_{+}(\cos^2 s)\xi_{+}
\end{equation}
with $f_{\pm}$ smooth functions. Then $V$ is a steady Euler flow with pressure $p(\cos^2 s)=2\int_{0}^{\cos^2 s} f_{-}(t)f_{+}(t)\dif t$. In order to have an $S$-integrable flow $V$ we assume $f_{+}+f_{-}=\ell h$ and $f_{+}-f_{-}=k h$, for $k, \ell \in \ZZ$ and $h(\cos^2 s)$ a smooth function. Then the associated submersion is of the type described below.

The $\alpha$-\textit{Hopf construction} (\cite{smi}) provides us with mappings $\varphi^{(k, \ell),\alpha}: \Ss^{3} \to \Ss^2(\tfrac{1}{2})$ defined by:
\begin{equation}\label{aho}
\varphi^{(k, \ell),\alpha}(\cos s \cdot e^{\ii \phi_1}, \ \sin s \cdot e^{\ii \phi_2}) = 
\left(\sin \alpha (s) \cdot e^{\ii (-k \phi_1 + \ell \phi_2)}, \  \cos \alpha (s) \right),
\end{equation}
where $k, \ell \in \ZZ^*$ and $\alpha:[0, \pi /2] \to [0, \pi]$ usually satisfies the boundary conditions $\alpha(\{0, \pi /2\})=\{0, \pi\}$. When $(k, \ell)=(\pm 1, 1)$ and $\alpha(s) = 2s$, this construction gives us the (conjugate) Hopf fibration as a Riemannian submersion. In general, the Hopf invariant is $Q(\varphi^{(k, \ell),\alpha})=k\ell$. Notice that the vector field $V_{k, \ell}=\ell\tfrac{\partial}{\partial \phi_1}+k\tfrac{\partial}{\partial \phi_2}$ is tangent to the fibres. 

Since generally $\varphi^{(k, \ell),\alpha}$ has critical points only at $s\in\{0, \pi /2\}$, we shall need the following 

\begin{re}[Regularity at the poles] Suppose that $\alpha$ is a smooth function and that it exists $a, b \geq 1$ for which the following limits are finite
$$\lim_{s\to 0} \frac{\sin \alpha (s)}{\sin^a s} \in \RR, \quad
\lim_{s\to \pi/2} \frac{\sin \alpha (s)}{\cos^b s} \in \RR. $$ 
If $a \geq \ell$ and $b\geq k$, then the mapping $\varphi^{(k, \ell),\alpha}$ is smooth on $\Ss^3$. 
\end{re}

\begin{ex}\label{exharm}
Let $\varphi=\varphi^{(k, k),\alpha} : \Ss^3 \to \Ss^2(\tfrac{1}{2})$ be defined by the $\alpha$-Hopf construction as above, thus having the Hopf invariant $Q(\varphi)=k^2$.
Choosing $\alpha(s)=2\arctan\left(\tan^k s \right)$ we obtain a globally defined,  smooth harmonic almost submersion with critical points located at $s=0$ and $s=\tfrac{\pi}{2}$. Note that this map is the composition of $z \to z^k$ with the Hopf map. We can check that in this case $\lambda_1^2 =\lambda_2^2$ and
$$\abs{\lambda_1 \lambda_2}=\frac{ k^2 \sin ^{2(k-1)}\!s \, \cos^{2(k-1)}\!s}{\left(\sin ^{2k} s + \cos ^{2k}s \right)^2}.$$

Define the charge-dependent potential:
$$
\ov P(\varphi_1, \varphi_2, \varphi_3) = \tfrac{k^4}{2^{5}}\left(1-\varphi_3^2\right)^{2(k-1)/k}\left((1+\varphi_3)^{1/k}+(1-\varphi_3)^{1/k}\right)^4
$$
For instance, if $k=2$, then $\ov P$ is given by the following linear combination of (generalized) new baby Skyrme potentials:\\ $\ov P(\varphi_1, \varphi_2, \varphi_3)=2\left(1-\varphi_3^2\right) +4\left(1-\varphi_3^2\right)^{3/2}+2\left(1-\varphi_3^2\right)^2$. 

The unit vector field tangent to the fibres of $\varphi$ is $U=\xi_{+}=\tfrac{\partial}{\partial \phi_1}+\tfrac{\partial}{\partial \phi_2}$ and one can see that the fibers are minimal $\mu^\VV=\nabla_U U =0$. By noticing the equality
$P(\cos s \cdot e^{\ii \phi_1}, \ \sin s \cdot e^{\ii \phi_2})=\tfrac{1}{2}\lambda_1^2 \lambda_2^2$, according to Corollary \ref{eqs2} we conclude that
\begin{itemize}
\item $V=\abs{\lambda_1 \lambda_2} U$ is a steady Euler flow with zero pressure;

\item $\varphi$ is a $\sigma_2$-critical smooth submersion with (charge dependent) potential $\ov P$. Being also harmonic, $\varphi$ is a solution on $\Ss^3$ of the full Faddeev-Skyrme model with potential $\ov P$.
\end{itemize}
Notice moreover that 
$$\int_{\Ss^3} \abs{V}^2 \nu_g=
\Ee_{\sigma_2,P}(\varphi)=\tfrac{2\pi ^2}{3}  \left(k^3+\left(k^2-1\right) \pi  \csc\left(\frac{\pi }{k}\right)\right)$$
which reduces to $\Ee_{\sigma_2,P}(\varphi)=2\pi^2$ for $k=1$ (i.e. Hopf map) case and has the following asymptotic behaviour $\lim_{k\to \infty}\tfrac{1}{k^3}\Ee_{\sigma_2,P}(\varphi)=\tfrac{4 \pi^2}{3}$ (i.e. for relatively large charges, $\Ee_{\sigma_2,P}(\varphi)$ scales as $Q(\varphi)^{3/2}$).
\end{ex}

For comparison let us derive one of the solutions in \cite{ada} starting with the steady Euler flow \eqref{khes} on $\Ss^3$ described above, again with $k=\ell$. In this case $f_{+}=h$ and $f_{-}=0$, so the pressure is constant. Therefore the Bernoulli function that will play the role of potential term is $P=\tfrac{1}{2}k^2 h^2(\cos^2 s) + cst$. For the associated submersion $\varphi^{(k,\ell), \alpha}$, asking that $\abs{\lambda_1 \lambda_2}=\abs{V}$ (cf. Proposition \ref{main}) and imposing the potential to be of the form $\ov P =1 - \varphi_3$ (old baby Skyrme potential) results in the equations:
\begin{eqnarray*}
&\alpha^\prime(s)\sin \alpha(s)=2\sin (2s)h(\cos^2 s); \\ 
&\alpha^\prime(s)\sin \alpha(s)=-k^2 \sin (2s)h(\cos^2 s)h^\prime(\cos^2 s)
\end{eqnarray*}
with the solution $\alpha(s)= \arccos\left(2(1+k^{-2})\cos^2 s -2k^{-2}\cos^4 s -1\right)$ that corresponds to \cite[(34)]{ada}. If $k=1$, this yields a smooth solution. If $k \geq 2$, then the submersion corresponding to $\alpha$ is continuous but not smooth (however, its energy is still well defined since its partial derivatives are continuous and bounded on the complement of a set of measure zero in $\Ss^3$).

\medskip

In Example \ref{exharm} we have seen that there exist classical solutions for our variational problem if we make a \textit{convenient choice of potential} depending on the homotopy class, a strategy reminiscent of \cite{lpz, war} in the baby Skyrme model case. In the last part of this section we shall keep standard choices for the potential but we \textit{allow non-standard  metrics} on the domain $M$ (again depending on the homotopy class). While taking this freedom is questionable from physical point of view (higher charge configurations might have a specific behaviour in approaching the vacuum or specific living space geometry), it may shed a new light to the existence theory for classical solutions for Faddeev-Skyrme model with mass term. Moreover let us recall that even for the (elliptic) harmonic map problem this kind of construction is the only known way to obtain (semiconformal) solutions between spheres (\cite[Ch.13]{ud}) in closed form.

The first non-standard metric that we consider in the next example is an ellipsoidal (or "squashed") metric, where the squashing factors are given in terms of $k$ and $\ell$ defining the Hopf invariant of the map. We note that this kind of squashed 3-sphere has been recently considered in the context of supersymmetric gauge theories (\cite{ham}).

\begin{ex}\label{squash1} Let $a >0$ and $k, \ell \in \ZZ$. Consider $\Ss^3$ endowed with the squashed metric
$$
g_{k, \ell} = a^2\left[(k^2 \sin^2 s + \ell^2 \cos^2 s)\dif s^2 + k^2 \cos^2 \!s \, \dif \phi_1^2 + \ell^2 \sin^2 \!s \, \dif \phi_2^2 \right]
$$
given by the restriction of the metric $G= a^2\left[k^2\abs{\dif z_0}^2 + \ell^2 \abs{\dif z_1}^2\right]$ on $\CC^2$. Let $\varphi=\varphi^{(k,\ell),\alpha}: (\Ss^3 , g_{k, \ell}) \to \Ss^2(\tfrac{1}{2})$ be defined by \eqref{aho}. Its $\sigma_2$-energy density can be directly computed to be
$$
\lambda_{1}^{2}\lambda_{2}^{2}=
\frac{\left[\alpha^{\prime}(s)\right]^2 \sin^2 \alpha(s)}{16 a^4 \sin^2 s \cos^2 s\left(k^2 \sin^2 s + \ell^2 \cos^2 s\right)}.
$$ 
The unit vector field tangent to the fibres of $\varphi$ is 
\begin{equation}\label{ukl}
U=\tfrac{1}{a k \ell}\left(\ell\tfrac{\partial}{\partial \phi_1}+k\tfrac{\partial}{\partial \phi_2}\right)
\end{equation}
and it can be seen that $\mu^\VV=\nabla_U U =0$. Choose the new baby Skyrme potential $\ov{P}(\varphi)=1-\varphi_3^2$. Then asking $\frac{1}{2}\lambda_{1}^2\lambda_{2}^2=\ov P(\varphi)$ (cf. Corollary \ref{eqs2}) we obtain $a^2=\tfrac{3\pi}{4\sqrt{2}} (k + \ell)/(k^2 + k \ell + \ell^2)$ and the profile function ($k\neq \ell$)
$$
\alpha(s)=\frac{\pi  \left(4 k^3-\sqrt{2} \left(k^2+\ell^2-\left(k^2-\ell^2\right)
\cos(2 s)\right)^{3/2}\right)}{4 \left(k^3-\ell^3\right)}
$$
satisfying boundary conditions $\alpha(0)=\pi$ and $\alpha(\pi/2)=0$. Notice that for $k=\ell$ we obtain $\alpha(s)=\pi \cos^2 s$ (compare with \cite[(55)]{ada}). The corresponding mapping $\varphi$ is an almost submersion with critical points generically located at $s=0$ and $s=\tfrac{\pi}{2}$, which is globally smooth for $\abs{k}, \abs{\ell} \in \{1,2\}$ and only $C^1$  otherwise (with the singular set of measure zero in the latter case).
We can conclude that
\begin{itemize}
\item $V=\abs{\lambda_1 \lambda_2} U$ is a steady Euler flow with zero pressure;

\item $\varphi$ is a $\sigma_2$-critical submersion (smooth for charges 1, 2 or 4) with standard potential $\ov{P}(\varphi)=1-\varphi_3^2$. 
\end{itemize}
Notice moreover that 
$\int_{\Ss^3} \abs{V}^2 \nu_g=\Ee_{\sigma_2,P}(\varphi)=2^{5/4} \pi ^{7/2} \sqrt{3} \, k \ell \sqrt{\frac{k+\ell}{k^2+k\ell+\ell^2}}$ which scales as $Q(\varphi)^{3/4}$ for $k=\ell$.
\end{ex}

The second non-standard metric is conformally related to the standard round metric.

\begin{ex}\label{squash2} Let $a >0$ and $k, \ell \in \ZZ$. Consider $\Ss^3$ endowed with the metric
$$
g_{k, \ell} = \frac{a^2 k^2 \ell^2}{k^2 \sin^2 s + \ell^2 \cos^2 s}\left(\dif s^2 +  \cos^2 \!s \, \dif \phi_1^2 +  \sin^2 \!s \, \dif \phi_2^2 \right).
$$
Let $\varphi=\varphi^{(k,\ell),\alpha}: (\Ss^3 , g_{k, \ell}) \to \Ss^2(\tfrac{1}{2})$ be defined by \eqref{aho} with he unit vector field tangent to the fibres given by \eqref{ukl}.
Again one has $\mu^\VV=\nabla_U U =0$. 

For the new baby Skyrme potential $\ov{P}(\varphi)=1-\varphi_3^2$, asking $\frac{1}{2}\lambda_{1}^2\lambda_{2}^2=\ov P(\varphi)$ (cf. Corollary \ref{eqs2}) gives us $a^2=\tfrac{\pi}{4\sqrt{2}} (k +\ell)/k\ell$ and the profile function ($k\neq \ell$)
$$
\alpha(s)=\frac{\ell \pi}{k-\ell}  \left(\frac{k\sqrt{2}}{[k^2+\ell^2-(k^2-\ell^2)\cos(2s)]^{1/2}} -1\right)
$$
satisfying $\alpha(0)=\pi$ and $\alpha(\pi/2)=0$. For $k=\ell$ we obtain again $\alpha(s)=\pi \cos^2 s$. The corresponding mapping $\varphi$ is an almost submersion with critical points at $s=0$ and $s=\tfrac{\pi}{2}$, which is globally smooth for $\abs{k}, \abs{\ell} \in \{1,2\}$ and only $C^1$  otherwise.
We can conclude that
\begin{itemize}
\item $V=\abs{\lambda_1 \lambda_2} U$ is a steady Euler flow with zero pressure;

\item $\varphi$ is a $\sigma_2$-critical submersion (smooth for charges 1, 2 or 4) with standard potential $\ov{P}(\varphi)=1-\varphi_3^2$. 
\end{itemize}
Notice moreover that 
$\int_{\Ss^3} \abs{V}^2 \nu_g=\Ee_{\sigma_2,P}(\varphi)=2^{-7/4} \pi ^{7/2} \sqrt{k\ell(k+\ell)}$ which scales as $Q(\varphi)^{3/4}$ for $k=\ell$.
\end{ex}

\begin{re} The vector field defined in Equation \eqref{ukl} is a unit (non-linear, rotational) Beltrami field with respect to both metrics considered in Examples \ref{squash1} ans \ref{squash2}.
\end{re}

Nevertheless the vector field \eqref{ukl} cannot be itself the steady Euler flow associated to an $\mathcal{E}_{\sigma_2}$-critical mapping into a surface (cf. Prop. \ref{main}, the $\sigma_2(\varphi)$ would be constant and $\varphi$ submersive, while the associated foliation has two singular orbits if $k \neq 1$, $\ell \neq 1$, a contradiction). In order to make this possible, one has to allow an orbifold codomain as in the following last example. This will be the appropriate realisation of the Boothby-Wang construction that we have attempted in \cite[Example 5.6]{slo} where neither the domain metric, nor the mapping $\varphi$ were globally smooth (in the present approach conical singularities are "confined" on the codomain).

\begin{ex}
The (smooth, non-singular, global) vector field $V_{k, \ell}=\ell\tfrac{\partial}{\partial \phi_1}+k\tfrac{\partial}{\partial \phi_2}$ is a linear Beltrami field with respect to the \emph{weighted Sasakian metric} $($\cite{taka}$)$ on $\Ss^3$ \begin{equation*}
g_{\mathbf{w}}=\tfrac{1}{\varsigma}\left(\dif s^2 + \tfrac{1}{\varsigma^2} \sin^2 \! s \cos^2 \! s(k\dif \phi_1 -\ell \dif \phi_2)^2\right) + \tfrac{1}{\varsigma^2}(\cos^2 \! s \, \dif \phi_1 + \sin^2 \! s \, \dif \phi_2)^2,
\end{equation*}
where $ \varsigma(\cos s e^{\ii \phi_1},\sin s e^{\ii \phi_2})= k \sin^2 \! s +  \ell \cos^2 \! s$ is a (globally defined) function on $\Ss^3$. In fact $V_{k, \ell}$ is the \emph{Reeb vector field} of the contact structure defined by $\eta_{\mathbf{w}}=\tfrac{1}{k\sin^2 s+\ell \cos^2 s}\eta_{can}$, where $\eta_{can}$ is the standard contact form on the round $3$-sphere, and $g_{\mathbf{w}}$ is an associated metric. The volume form of the weighted sphere and of the standard one are related by:
$$
\nu_{g_{\mathbf{w}}}=\frac{1}{\varsigma^2} \nu_{g_{can}},
$$
so in particular $\Vol_{g_{\mathbf{w}}}(\Ss^3)=\frac{2 \pi^2}{k\ell}$ (the higher charge configurations get more confined).

As steady Euler flow with respect to this new metric, $V_{k, \ell}$ is also the unit tangent vector field to the (geodesic) fibres of the smooth Riemannian orbifold submersion $\varphi(z_0, z_1) = [z_0^k, z_1^\ell]$ with $Q(\varphi)=k\ell$, which is therefore a pure $\sigma_2$-critical submersion from the weighted sphere $(\Ss^3, g_{\mathbf{w}})$ onto the \emph{weighted projective space} $\CC P^1_{\mathbf{w}}$ as compact complex orbifold with an induced K\"ahler structure $($\cite[Theorem 7.5.1]{boy}$)$. 
\end{ex}

\end{document}